\numberwithin{equation}{section}
\newtheorem{thm}{Theorem}[section]
\newtheorem{defn}{Definition}[section]
\newtheorem{lem}[thm]{Lemma}
\newtheorem{prop}[thm]{Proposition}
\newtheorem{exa}{Example}[section]
\def\ni{\noindent}
\def\N{\mathbb{N}}
\def\A{\mathbb{A}}
\def\cP{\mathcal{P}}
\title{\textbf{\sc Tattooing and the Tattoo Number of Graphs}}
\author{Johan Kok}
\affil{\small Tshwane Metropolitan Police Department\\ City of Tshwane, Republic of South Africa \\ {\tt kokkiek2@tshwane.gov.za.}}
\author{Naduvath Sudev}
\affil{\small Department of Mathematics\\ Vidya Academy of Science \& Technology \\ Thalakkottukara, Thrissur - 680501, India.\\ {\tt sudevnk@gmail.com}}
\date{}
\begin{document}
\maketitle

\begin{abstract}
Consider a network $D$ of pipes which have to be cleaned using some cleaning agents, called brushes, assigned to some vertices. The minimum number of brushes required for cleaning the network $D$ is called its brush number. The tattooing of a simple connected directed graph $D$ is a particular type of the cleaning in which an arc are coloured by the colour of the colour-brush transiting it and the tattoo number of $D$ is a corresponding derivative of brush numbers in it. Tattooing along an out-arc of a vertex $v$ may proceed if a minimum set of colour-brushes is allocated (primary colours) or combined with those which have arrived (including colour blends) together with mutation of permissible new colour blends, has cardinality greater than or equal to $d^+_G(v)$. 
\end{abstract}

\textbf{Keywords:} Tattooing of graphs, tattoo number, primary colour, colour-brushes, $J9$-graphs, Joost graph, friendship graph.

\vspace{0.35cm}

\ni \textbf{Mathematics Subject Classification:} 05C20, 05C35, 05C38, 05C99.
 
\section{Introduction}

For a general reference to notation and concepts of graph theory and digraph theory, not defined specifically in this paper, please see \cite{BM1,BLS,CL1,FH,EWW,DBW}.  For the graph colouring concepts, please see \cite{CZ1,JT1}. Unless mentioned otherwise, all graphs and digraphs considered in this paper are simple, finite, connected and non-trivial.

As a combination of graph searching problems (see \cite{TDP1,TDP2}) and chip firing problems (see \cite{BLS1}), a graph cleaning model was introduced in \cite{SM1}, which can be explained as follows. Consider a network $D$ of pipes that have to be periodically cleaned. For this purpose, we can use some cleaning agents called \textit{brushes}, assigned to some vertices of $D$. When vertex is cleaned, a brush must travel down each contaminated edge. When a brush traverse an edge, that edge is said to be cleaned and a graph $G$ is considered to be cleaned when every edge of $G$ has been cleaned. 

Some important and interesting studies on the brush number $b_r(G)$ of certain graphs have been done (For some of these studies, see \cite{KSK1,MEM1,MEM2}). In the cleaning models, the problem is initially set in such a way that all edges of a simple connected undirected graph $G$ are dirty. A finite number of brushes, $\beta_G(v) \ge 0$ is allocated to each vertex $v \in V(G)$. Sequentially, any vertex $v$, which has $\beta_G(v)$ brushes allocated to it, may clean the vertex itself and send exactly one brush along a dirty edge and in doing so allocate an additional brush to the corresponding adjacent vertex (neighbour). The reduced graph $G'=G-vu,\ \forall\, vu \in E(G),\ \beta_G(v) \ge d(v)$ is considered for the next iterative cleansing step. It is to be noted that a neighbour of the vertex $v$ in $G$, say $u$, now have $\beta_{G'}(u) =\beta_G(u) + 1$ brushes.

Clearly, for any simple connected undirected graph $G$ the first step of cleaning begin if and only if the number of brushes allocated to a vertex $v$ is greater than or equal to the degree of $v$; that is, $\beta_G(v)= d(v)$. Then, the minimum number of brushes that is required to commence the first step of cleaning is $\beta_G(u)=d(u)=\delta(G);\ u\in V(G)$. Also, note that the above mentioned condition does not guarantee that the graph will be cleaned, but just assures at least the occurrence of the first step of cleaning only.

If some orientation of the edges of a simple connected graph $G$ is given, then $G$ is a directed graph and brushes may only clean along an out-arc from a vertex. Cleaning may initiate from a vertex $v$ if and only if $\beta_G(v) \ge d_G^+(v)$ and $d_G^-(v) = 0$. The sequence in which vertices sequentially initiate cleaning is called the \textit{cleaning sequence} with respect to the orientation $\alpha_i(G)$. The minimum number of brushes to be allocated to clean a graph for a given orientation $\alpha_i(G)$ is denoted $b_r^{\alpha_i}(G)$. If an orientation $\alpha_i(G)$ renders cleaning of the graph unrealisable, define $b_r^{\alpha_i}(G)=\infty$. 

An orientation $\alpha_i(G)$ for which $b_r^{\alpha_i}(G)$ is a minimum over all possible orientations of $G$ is called an \textit{optimal orientation} of $G$ with respect to that cleaning model. This minimum number of brushes is called the \textit{brush number} of the graph $G$, denoted by $b_r(G)$. It is conventional to use $b_r(\mathcal{N}_n) = n$, where $\mathcal{N}_n$ is the null graph (edgeless graph) of order $n$.

If $\epsilon$ denotes the size of the graph $G$, then $G$ can have $2^{\epsilon}$ orientations and hence an \textit{optimal orientation} of $G$ need not be a unique one. Hence, we can define the collection of orientations of $G$, denoted by $\A$, can be defined as $\A =\{\alpha_i(G): \alpha_i\ \text{is an orientation of}\ G\}$.

Motivated by various studies on brush number on graphs, in this paper, we now introduce certain new colouring parameters derived from the brush number of graphs and study some important and interesting properties of these parameters.  

\section{Tattooing of Graphs}

In all studies on brush number of graphs, brushes are considered to be identical objects and when cleaning is initiated from a vertex $v$, a particular brush can clean along any out-arc at random. There is no preferred condition set on the choice of an out-arc. In the following study of tattooing of graphs, we use the property of colouring. The \textit{tattooing} of a simple connected directed graph $D$ is a particular type of the cleaning in which an arc will be coloured by the colour of the colour-brush transiting it and the tattoo number of the directed graph $D$ is a corresponding derivative of brush numbers in it. Initial colours will be called \textit{primary colours} and primary colours will be allowed to blend into additional \textit{colour blends}. 

The motivation for this mathematical model lies in various observations that many dynamical systems propagate through the system (considered to be a network or graph) upon reaching a threshold value. Certain other examples are electrical potential in cloud-earth systems to allow electrical arcing to create lightning and the associated sound of thunder. Some viruses or anti-viruses be it, biological or virtual, have a period of incubation to allow for either growth in numbers or mutation before propagation. It is common that pressure systems function on the diffusion of peak pressures through pressure valves. Perhaps the most historical example is the creation of our universe with the first event, called the Big Bang.

Consider a set of colour-brushes $\mathcal{C} = \{c_i:1\le i \le n; n \in \N\}$.  Set $\mathcal{C}$ represents the \textit{primary colours} and when the context is clear, referred to as primary colour-brushes or simply colour-brushes. Let the initial allocation (Step $t= 0$ of the tattooing process), say $s$, $s\ge 0$, of primary colours to a vertex $v$     be the set $X_{t=0}(v) =\{c_i: i = 1,2,3,\ldots,s; s\in \N\}$. This allocation is allowed to mutate (which is also called blending) to include all possible \textit{primary blends} of colour-brushes. Hence, this allocation is the set $\cP_0(X_{t=0}(v))$ of all non-empty subsets of $(X_{t=0}(v))$.

A \textit{primary colour blend} is the colour blend of at least two distinct primary colours. Primary colour blends may not mutate into \textit{secondary blends}. During the $i^{th}$-step of tattooing a number of identical primary colour-brushes or primary colour blends may arrive at a vertex $v$. Following from set theory repetition is not allowed. Such a set is reduced to a set of distinct primary colours or colour blends. Obviously, $\{c_1,c_2,c_3\ldots, c_s\}_{s=0} = \emptyset$. In this paper, we now write $\{c_{i,j,k,\ldots,\ell}\}$ to shortly represent $\{c_i,c_j,c_k \ldots, c_\ell\}$.

\ni Let us now introduce the notion of the tattoo power set of the set of colours as follows.

\begin{defn}\label{Defn-2.1}{\rm 
At the $i^{th}$-step, $i\ge 0$, the \textit{tattoo power set} of $X_{t=i}(v)$ is defined to be
\begin{enumerate}
	\item[(i)] $\cP^*(X_{t=i}(v)) = \cP_0(\{c_1,c_2,c_3,\ldots,c_s\})$ or $\emptyset$ (typically, but not exclusively at $0^{th}$-step), or
	\item[(ii)] $\cP^*(X_{t=i}(v)) = \cP_0(\{c_i, c_j,c_k,\ldots c_t\})$, because only primary colour-brushes have arrived, or
	\item[(iii)] $\cP^*(X_{t=i}(v)) = \cP_0(\{c_i, c_j,c_k,\ldots c_t\}\cup \{$primary colour blends $\in \cP^*_0(X_{t=i-1}(u)), \\ (u,v) \in A(G)\}$, or
	\item[(iv)] $\cP^*(X_{t=i}(v)) = \{$primary colour blends $\in \cP^*_0(X_{t=i-1}(u)), (u,v) \in A(G)\}$ only.
\end{enumerate}
}\end{defn}

\begin{exa}{\rm 
\begin{enumerate}
\item[(i)] Let $X_{t=i}(v)=\{c_1,c_2,c_3\}$. Then, we have $\cP_0(X_{t=i}(v)) = \{\{c_1\},\{c_2\},\{c_3\},\{c_1,c_2\},\{c_1,c_3\},\{c_2,c_3\}, \{c_1,c_2,c_3\}\}$. Hence, $\cP^*(X_{t=i}(v)) = \cP_0(X_{t=i}(v))$.

\item[(ii)]  Assume that during a step the colour-brushes $\{c_i, c_i, c_i,c_k,c_k, c_{s,t,\ell}\}$ arrive at vertex $v$. First, consider $\cP^*_0(\{c_i, c_i,c_i,c_k, c_k, c_{s,t,\ell}\})= \cP_0(\{c_i,c_k\}\cup \{c_{s,t,\ell}\}) = \{\{c_i\},\{c_k\},\{c_i,c_k\},\{c_{s,t,\ell}\}\}$ to decide whether sufficient colour-brushes are available to proceed tattooing. If not sufficient, consider the set $\{c_i,c_k\}\cup \{c_{s,t,\ell}\}$ and determine the minimum addition distinct primary colours with smallest subscripts needed. If for example, an additional primary colour $c_m$ is needed, consider the set $\{c_m, c_i,c_k\}\cup \{c_{s,t,\ell}\}$. Now consider the new tattoo power set $\cP^*_0(X_{t=i}(v))= \{\{c_i\},\{c_k\},\{c_m\},\{c_i,c_m\},
\{c_i,c_k\},\{c_k,c_m\},\{c_i,c_k,c_m\},\{c_{s,t,\ell}\}\}$.
\end{enumerate}
}\end{exa} 

\begin{exa}{\rm
If the vertex $v$ is initially allocated with the colour-brushes $X_{t=0}(v) = \{c_1,c_2\}$, then before tattooing begins the mapping $\{c_1,c_2\}\mapsto \cP^*_0(X_{t=0}(v)) = \{\{c_1\},\{c_2\},\{c_1,c_2\}\}$ represents the permissible mutation. Thereafter, the mapping $\{\{c_1\},\{c_2\},\{c_1,c_2\}\} \mapsto \{c_1, c_2, c_{1,2}\}$ represents the colour-brushes which may proceed with tattooing.
}\end{exa}

\begin{exa}{\rm
Assume during the $i^{th}$-step of tattooing a vertex $v$ has been allocated the colour-brushes $X_{t=i}(v) = \{c_1, c_2, c_i,c_{t,s\ldots \ell}\}$. Then before tattooing begins in the $(i+1)^{th}$-step the mapping $\{c_1, c_2, c_i,c_{t,s\ldots \ell}\}\mapsto \cP^*_0(X_{t=i+1}(v)) = \{\{c_1\},\{c_2\},\{c_i\},\{c_1,c_2\},\{c_1,c_i\}, \{c_2,c_i\},\{c_1,c_2,c_i\},\{c_{t,s\ldots \ell}\} \mapsto \{c_1,c_2,c_i,c_{1,2},c_{1,i},\\ c_{2,i},c_{1,2,i},c_{t,s\ldots \ell}\}$ represents the permissible colour-brushes.
}\end{exa}

We now introduce some more new parameters with respect to the tattooing of given graphs as follows.

\begin{defn}\label{Defn-2.2}{\rm 
The \textit{tattoo label} of an arc $a_i$, denoted $l(a_i)$, is defined to be the ordered subscript(s) of either the primary colour-brush or the blended colour-brush tattooing along the arc. Furthermore, the sum of the entries of $l(a_i)$ is denoted $l_\Sigma(a_i)$.
}\end{defn}

\begin{defn}{\rm 
The \textit{random tattoo number} of a simple connected randomly directed graph $G$ having orientation $\alpha_i(G)$,which belongs to the $2^{|E(G)|}$ possible orientations, denoted by $\tau^{\alpha_i}(G)$, is the minimum number of times primary colour-brushes are allocated to vertices of $G$ to iteratively tattoo along all arcs of $G$. The count excludes the transition of a primary colour-brush from one vertex to another.
}\end{defn}

\begin{defn}{\rm 
The \textit{tattoo number} of a simple connected graph $G$ denoted $\tau(G)$ is defined to be $\tau(G) = \min\{\tau^{\alpha_i}(G):\forall\, \alpha_i(G)\}\}$.
}\end{defn}

\begin{exa}\label{Ex-2.4}{\rm 
	Consider the directed path $P_n = v_1a_1v_2a_2v_3a_3\ldots v_{n-1}a_{n-1}v_n$, the directed cycle $C_n = v_1a_1v_2a_2v_3a_3\ldots v_{n-1}a_{n-1}v_n + (v_1,v_n)$.
\begin{enumerate}\itemsep0mm 
	\item[(i)] Let the initial colour-brush allocation of the directed path $P_n$ be $v_1 \mapsto\{c_1\},v_2\mapsto \emptyset, v_3\mapsto \emptyset \ldots v_n\mapsto \emptyset$. Clearly, iterative tattooing of all arcs is possible hence, $\tau(P_n) = 1$ with $l(a_1) = l(a_2)=\ldots = l(a_{n-1}) = (1)$.
	\item[(ii)] Let the initial colour-brush allocation of the directed cycle $C_n$ be $v_1 \mapsto\{c_1,c_2\},v_2\mapsto \emptyset, v_3\mapsto \emptyset \ldots v_n\mapsto \emptyset$. After mutation we have $v_1 \mapsto\{\{c_1\},\{c_2\},\\ \{c_1,c_2\}\},v_2\mapsto \emptyset, v_3\mapsto \emptyset \ldots v_n\mapsto \emptyset$. Clearly, tattooing is possible in six different ways.
\end{enumerate} 
Either:
\begin{enumerate}\itemsep0mm
	\item[(a)] $c_1$ tattoo along $a_1,a_2,\ldots,a_{n-1}$ and $c_2$ tattoo along $(v_1,v_n)$ with $l(a_1) = l(a_2)=\ldots = l(a_n) = (1)$ and $l((v_1,v_n)) = (2)$ or vice versa, or 
	\item[(b)] $c_1$ tattoo along $a_1,a_2,\ldots,a_{n-1}$ and $c_{1,2}$ tattoo along $(v_1,v_n)$ with $l(a_1) = l(a_2)=\ldots = l(a_n) = (1)$ and $l((v_1,v_n)) = (1,2)$  or vice versa, or
	\item[(c)] $c_2$ tattoo along $a_1,a_2,\ldots,a_{n-1}$ and $c_{1,2}$ tattoo along $(v_1,v_n)$ with $l(a_1) = l(a_2)=\ldots = l(a_n) = (2)$ and $l((v_1,v_n)) = (1,2)$  or vice versa.
\end{enumerate} 

Since $\cP^*_0(\{c_1\})= \{c_1\}$, the initial allocation of one colour-brush to a vertex will not be sufficient and therefore, we have $\tau(C_n)=2$.
}\end{exa}

\ni We now have a perhaps, obvious but important result for an undirected graph $G$.

\begin{lem}
Consider a simple connected undirected graph $G$. Over all orientations $\alpha_i$ of a graph $G$, we have $\tau(G) =\min\{\tau^{\alpha_i}(G)\} \le b_r(G)$. 
\end{lem}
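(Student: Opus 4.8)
The plan is to establish the stronger \emph{orientation-wise} inequality $\tau^{\alpha_i}(G) \le b_r^{\alpha_i}(G)$ for every single orientation $\alpha_i$ of $G$, and then take the minimum over all orientations on both sides. Since taking minima preserves the inequality and both $\tau(G)$ and $b_r(G)$ are defined as such minima, this immediately yields $\tau(G) = \min_i \tau^{\alpha_i}(G) \le \min_i b_r^{\alpha_i}(G) = b_r(G)$, which is exactly the claim. (If some orientation renders cleaning unrealisable, then $b_r^{\alpha_i}(G) = \infty$ and the per-orientation inequality is trivial, so we may restrict attention to orientations that admit cleaning.)

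To prove $\tau^{\alpha_i}(G) \le b_r^{\alpha_i}(G)$ for a fixed orientation $\alpha_i$, I would start from an optimal brush cleaning of the oriented graph: let $b_r^{\alpha_i}(G) = k$ be realised by an allocation of $k$ brushes to the vertices together with a cleaning sequence that cleans every arc. The idea is to \emph{colour the brushes}. Assign $k$ distinct primary colours $c_1,\dots,c_k$ to the $k$ brushes, and then run the tattooing under the same orientation $\alpha_i$, taking the initial distribution of the brush allocation as the initial distribution of the primary colour-brushes. By construction this is an allocation of $k$ primary colour-brushes, so $\tau^{\alpha_i}(G) \le k$ follows as soon as the tattooing can be made to colour every arc.

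The heart of the argument is to show that the tattoo process can follow the \emph{same} cleaning sequence, step for step, which I would argue inductively along the sequence. Suppose a vertex $v$ is about to clean in the brush process; this requires that all its in-arcs are already cleaned and that the number of brushes currently resident at $v$ (initial allocation plus those that have arrived) is at least $d_G^+(v)$. In the mirrored tattooing, the colour-brushes occupying $v$ are precisely the coloured counterparts of those resident brushes, and since distinct brushes carry distinct colours no set-theoretic collapse of repeated colours occurs; hence the number of distinct colour-brushes at $v$ equals the number of brushes at $v$, which is already $\ge d_G^+(v)$. Invoking the mutation rule of Definition~\ref{Defn-2.1}, the admissible blends only \emph{enlarge} the tattoo power set, so $|\cP^*(X_{t}(v))| \ge d_G^+(v)$ holds as well. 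Thus the tattooing cardinality threshold is met whenever the cleaning threshold is met, and we may dispatch one colour-brush along each out-arc exactly as the brush process dispatches one brush along each out-arc. Consequently every arc that gets cleaned also gets tattooed, and since the brush process cleans all arcs, the tattooing colours all arcs.

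I expect the main obstacle to be bookkeeping rather than conceptual: one must make precise that, after an arbitrary number of steps, the residency of coloured brushes at each vertex tracks the residency of plain brushes exactly, and that blending never \emph{decreases} the available count. The subtlety specific to the tattoo model is that it spontaneously generates blends, so I must confirm that these extra colour-brushes neither obstruct nor alter the simulation --- they are optional and may simply be left idle --- and that the ``one colour-brush per out-arc'' dispatch remains feasible using $d_G^+(v)$ distinct colours drawn from the at-least-$d_G^+(v)$ available ones. Once this monotonicity of the colour-brush count over the plain-brush count is secured, the orientation-wise inequality follows and the lemma is immediate.
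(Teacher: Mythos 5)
Your proposal is correct and follows essentially the same route as the paper: the paper's (very terse) proof simply asserts that since mutation guarantees $|\cP^*_0(X_{t=i}(v))| \ge d^+(v)$ whenever cleaning could proceed, tattooing is an analogue of the cleaning process and the inequality follows. Your argument --- colouring the $k$ brushes of an optimal cleaning with distinct primary colours, simulating the cleaning sequence step by step, and noting that blending only enlarges the available set while distinct colours prevent set-theoretic collapse --- is exactly the rigorous fleshing-out of that one-line analogy.
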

\begin{proof}
As $|\cP^*_0(X_{t=i}(v))| \ge d^+(v); \forall\, v$, the proof follows immediately as an analogue  the definition of cleaning processes and the definition of the brush numbers provided in \cite{MEM2}.
\end{proof}

\ni The next result is intuitively obvious, but requires a rigorous formal proof.

\begin{thm}
Let the graph $G$ of order $n$ have an optimal orientation in respect of brush cleaning. If $k_i\ge 0$ brushes were initially allocated to vertex $v_i \in V(G)$ to realise $b_r(G)$, then at most a set $X_s(v_i)$ of $s$ colour-brushes such that $|\cP^*_0(X_s(v_i))| \ge k_i$ are required at $v$ to realise $\min\{\tau(G)\}$. 
\end{thm}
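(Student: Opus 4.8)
The plan is to simulate the optimal brush cleaning by a tattooing carried out on the \emph{same} orientation, trading each brush for a colour-brush and exploiting blending to economise on primary colours. First I would fix the brush-optimal orientation $\alpha^*(G)$ with its allocation $k_i=\beta_G(v_i)$ realising $b_r(G)=\sum_i k_i$, and record the cleaning sequence $v_{\sigma(1)},\dots,v_{\sigma(n)}$ it induces. To each vertex $v_i$ I would assign a set $X_{s_i}(v_i)$ of $s_i=\lceil\log_2(k_i+1)\rceil$ primary colour-brushes, i.e.\ the least $s$ with $2^{s}-1\ge k_i$, taking the palettes $X_{s_i}(v_i)$ \emph{pairwise disjoint} (the palette $\mathcal{C}$ may be chosen as large as needed). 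By construction $|\cP^*_0(X_{s_i}(v_i))|=2^{s_i}-1\ge k_i$ at step $t=0$, which is the inequality in the statement; it remains to verify that this allocation actually tattoos every arc.

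The key is the invariant that, at the instant each vertex $v$ is reached along the sequence, $|\cP^*_0(X(v))|\ge\beta(v)$, where $\beta(v)$ is the brush count at the analogous instant. Since $v$ is processed only after all its in-arcs are clean, $\beta(v)=k_v+d^-(v)$, and brush-cleaning validity gives $k_v+d^-(v)\ge d^+(v)$. On the tattoo side $v$ carries its $s_v$ initial primaries together with the colour-brushes delivered along its $d^-(v)$ in-arcs; a fresh incoming singleton merges into the primary pool and raises $|\cP^*_0|$ by at least $1$ (in fact exponentially), whereas a fresh incoming blend is atomic—no secondary blending—and raises it by exactly $1$. Thus, provided the $d^-(v)$ arrivals at $v$ are pairwise distinct and distinct from $X_{s_v}(v)$, each contributes at least $+1$, so $|\cP^*_0(X(v))|\ge(2^{s_v}-1)+d^-(v)\ge k_v+d^-(v)=\beta(v)\ge d^+(v)$ and $v$ may complete its tattooing exactly where brush cleaning could.

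To secure this distinctness I would impose that every dispatching vertex send a \emph{different} colour-brush along each of its out-arcs, which is feasible since the invariant already guarantees $|\cP^*_0(X(u))|\ge d^+(u)$ at the earlier vertex $u$. Because the palettes are disjoint, each primary colour originates at a unique vertex and, being forwarded at most once per vertex, travels along a single directed path; consequently no two in-arcs of a vertex can deliver the same singleton, blend, or pass-through blend, and none coincides with the receiver's own palette. Running the sequence to its end then tattoos every arc, so the allocation is admissible, $\tau^{\alpha^*}(G)\le\sum_i s_i$, and at each vertex at most $s_i$ primary colour-brushes with $|\cP^*_0(X_{s_i}(v_i))|\ge k_i$ are required; a fortiori this bounds the allocation realising $\min\{\tau(G)\}$.

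The main obstacle is precisely this global distinctness. Colour-brushes coalesce—a repeated colour is counted once—and blends may not recombine, so a naive dispatch could let the tattoo count slip below the brush count and stall the process. Turning the informal ``each primary follows a single path'' observation into an airtight guarantee, ruling out in particular that two distinct in-neighbours of some vertex forward identical blends assembled from shared ancestor colours, is the delicate technical point; once distinctness is established the remaining arc-by-arc bookkeeping is routine.
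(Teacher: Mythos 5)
Your overall strategy---simulate the optimal brush orientation, give each vertex a palette of $s_i=\lceil\log_2(k_i+1)\rceil$ primaries so that $|\cP^*_0(X_{s_i}(v_i))|\ge k_i$, and propagate the invariant ``tattoo pool at firing time $\ge$ brush count at firing time'' along the cleaning sequence---is the same simulation idea as the paper's proof, which runs the time steps $t=0,1,\ldots,\ell$ adaptively, allocating minimal primary colours on the fly and splitting into three cases according to whether the arrivals at a vertex are primaries, primaries plus blends, or blends only. However, your argument has a genuine gap, and you correctly flag it yourself: the distinctness claim. The justification you offer (``each primary colour originates at a unique vertex and travels a single directed path, consequently no two in-arcs can deliver the same brush'') does not follow for blends. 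Under full mutation a vertex may blend pass-through primaries, and the singleton paths of two primaries $c_a$ and $c_b$ can meet at two different vertices $x$ and $y$ (diverge after $x$ along different out-arcs, reconverge at $y$); then both $x$ and $y$ hold $c_a$ and $c_b$ as primaries, both may create the identical blend $c_{a,b}$, and the two copies can be routed to a common descendant $z$ along different in-arcs. At $z$ they coalesce into a single brush (repetition is deleted), the available count drops strictly below the brush analogue $\beta(z)=k_z+d^-(z)$, and your invariant breaks. So, as written, the proof does not close.

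The gap is repairable inside your own framework, and the repair is short precisely because you chose the palettes pairwise disjoint: forbid cross-palette blending, i.e.\ let each vertex dispatch only (a) non-empty subsets of its own initial palette and (b) arrived colour-brushes forwarded as atomic units. Then every colour-brush in circulation is a non-empty subset of exactly one origin vertex's palette, is created at most once (its origin fires once and sends each subset along at most one out-arc), and thereafter travels a single directed path; hence the $d^-(v)$ arrivals at any vertex are pairwise distinct and disjoint from $v$'s own $2^{s_v}-1$ blends, so the pool of distinct brushes available at $v$ has size at least $(2^{s_v}-1)+d^-(v)\ge k_v+d^-(v)\ge d^+(v)$, which is exactly what your invariant needs. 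For comparison, the paper's published proof never confronts this duplication problem at all---it merely stipulates ``duplication deleted'' and asserts that the tattoo count stays below the brush count---so with the repair above your argument is actually more rigorous than the paper's, while reaching the same per-vertex bound $|\cP^*_0(X_s(v_i))|\ge k_i$.
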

\begin{proof}
Consider a graph $G$ of order $n$ with vertices $v_i; i=1,2,3,\ldots,n$. Let the tattooing propagate on the time line $t= 0,1,2,3,\ldots, \ell$. At $t=0$, allocate to all vertices $v_i$, $d^-(v_i)=0$, $i \in \{1,2,3,\ldots, n\}$ an initial minimum set $X_{t=0}(v_i)$ of colour-brushes such that $|\cP^*_0(X_{t=0}(v_i))|$ is greater than or equal to the initial number $k_i$  of brushes allocated for brush cleaning. Clearly, $|X_{t=0}(v_i)|\le k_i$ and $|\cP^*_0(X_{t=0}(v_i))| \ge d^+(v_i)$ hence, tattooing from these vertices may propagate. Remove these vertices $v_i$ together with the out-arcs that have been tattooed. Label this reduced graph $G'$. If $E(G') = \emptyset$, tattooing is complete. Else, consider all vertices $v_j \in V(G')$, $d^-(v_j) = 0$, $d^+(v_j) > 0$. Since $\{c_\ell\} = c_\ell$ we have to consider three cases similar to those in Definition \ref{Defn-2.1}.

\textit{Case (i)}: Assume $X_{t=1}(v_j) = \{c_i,c_j,c_k,\ldots, c_s\}$, with $i,j,k, \ldots, s$ with duplication deleted. 

If $|\cP^*_0(X_{t=1}(v_j))| \ge d^+(v_j)$ no additional colour-brushes are required. So far, the tattooing count is less than or equal to the brushing count. Tattooing may proceed at $t=1$.

If $|\cP^*_0(X_{t=1}(v_j))| < d^+(v_j)$, then add the minimum additional distinct primary colours with smallest subscripts to obtain $X'_{t=1}(v_j)$ such that $|\cP^*_0(X'_{t=1}(v_j))| \ge d^+(v_j)$. Clearly, the cardinality of the set with additional colour-brushes, $|\cP^*_0(X'_{t=1}(v_j))|$ is less than the additional brushes required for brush cleaning at $t=1$.

\textit{Case (ii):} Assume $X_{t=1}(v_j) = \{c_i,c_j,c_k,\ldots, c_s\} \cup \{$primary colour blends$\}$, with $i,j,k, \ldots, s$ with duplication deleted. The further reasoning that is similar to that for Case (i).

\textit{Case (iii)} Assume $X_{t=1}(v_j)$ is the set of all primary colour blends without duplicate entries. If $|\cP^*_0(X_{t=1}(v_j))| \ge d^+(v_j)$, then no additional colour-brushes are required. Therefore, the tattooing count is less than or equal to the brushing count. If $|\cP^*_0(X_{t=1}(v_j))| < d^+(v_j)$, then add the minimum additional distinct primary colours say $\omega$, to obtain $X'_{t=1}(v_j) = \{c_1,c_2,c_3,\ldots, c_\omega \} \cup X_{t=1}(v_j)$, such that $|\cP^*_0(X'_{t=1}(v_j))| \ge d^+(v_j)$. Clearly, the cardinality of the set with additional colour-brushes, $|\cP^*_0(X'_{t=1}(v_j))|$ is less than the additional brushes required for brush cleaning at $t=1$.

The result can be settled by proceeding through cases (i) to (iii) and for $t=2,3,\ldots,\ell$.
\end{proof}

The next lemma is useful in determining the exact minimum number of additional primary colours to be added, if need be.

\begin{lem}\label{Lem-2.3}
If, in the $i^{th}$-step, a vertex $v \in V(G')$ has $d^-_{G'}(v)=0$, $d^+_{G'}(v) = \ell$ and $\kappa$ colour-brushes (after blending) allocated and $\ell > \kappa$, then 
\begin{enumerate}\itemsep0mm
\item[(i)] If no primary colours (colour-brushes) have been allocated, either the exact minimum additional distinct primary colours to be added is $\lceil log_2(\ell-\kappa+1)\rceil$,
\item[(ii)] If $\omega$ distinct primary colours have been allocated, either the exact minimum additional distinct primary colours to be added is $\lceil log_2(\frac{\ell-\kappa}{\omega+1}+1)\rceil$.
\end{enumerate}
\end{lem}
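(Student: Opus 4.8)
The plan is to turn the lemma into a counting statement about the cardinality of the tattoo power set and then invert the resulting inequality. From Definition~\ref{Defn-2.1} and the worked examples, a vertex holding $p$ distinct primary colours together with $q$ distinct (atomic) primary colour blends has $|\cP^*_0| = (2^p-1)+q$: the $p$ primaries mutate into all $2^p-1$ of their non-empty sub-blends, whereas each arrived blend contributes exactly one element, since a blend may not mutate into a secondary blend. Tattooing along the $\ell=d^+_{G'}(v)$ out-arcs may proceed precisely when this cardinality is at least $\ell$, so the whole question is the following: adding $x$ fresh primary colours raises $p$ to $p+x$; what is the least $x$ making the count reach $\ell$?

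First I would dispose of part~(i). With no primary colours present we have $p=0$, so the stock of $\kappa$ brushes consists entirely of blends, giving $q=\kappa$ and current count $\kappa$. Introducing $x$ new primaries makes the count $(2^x-1)+\kappa$, and the requirement $(2^x-1)+\kappa\ge\ell$ is equivalent to $2^x\ge\ell-\kappa+1$. Since $2^x$ is strictly increasing in $x$, the least integer solution is $x=\lceil\log_2(\ell-\kappa+1)\rceil$; this $x$ satisfies the inequality while $x-1$ fails, which is exactly the claimed exactness, and choosing the colours with the smallest free subscripts pins down the allocation.

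For part~(ii) I would repeat the inversion, but now starting from the $\omega$ primaries already in place. The substance is to count the genuinely new tattoo-power-set elements created by appending $x$ primaries to the existing $\omega$, i.e.\ to express the updated count in the form $\kappa+(2^x-1)(\omega+1)$; solving $\kappa+(2^x-1)(\omega+1)\ge\ell$ for $x$ and taking the ceiling of the base-two logarithm then delivers $\lceil\log_2(\tfrac{\ell-\kappa}{\omega+1}+1)\rceil$. The algebra of isolating $2^x$ is routine; the main obstacle --- and the step I would scrutinise hardest --- is justifying the multiplicative factor $\omega+1$, that is, verifying that each of the $2^x-1$ sub-blends of the new colours contributes $\omega+1$ fresh elements under the mutation rule of Definition~\ref{Defn-2.1}(iii), rather than the factor $2^\omega$ one might naively expect from an unrestricted power-set mutation of all $\omega+x$ primaries together. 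I would therefore first fix, from that definition and the examples, the precise way the new primaries are permitted to combine with the $\omega$ pre-existing ones and with the arrived blends, and only after that accounting is settled carry out the inversion, so that the denominator $\omega+1$ is derived rather than assumed.
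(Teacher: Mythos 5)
Your part (i) is exactly the paper's argument: both solve $2^x-1\ge \ell-\kappa$ for the least integer $x$, giving $\lceil \log_2(\ell-\kappa+1)\rceil$, so there is nothing to add there. For part (ii), however, your proposal stops precisely where the proof has to do its only real work. You correctly identify that everything hinges on showing that appending $x$ fresh primaries to the $\omega$ existing ones creates exactly $(2^x-1)(\omega+1)$ new colour-brushes, rather than the $2^{\omega}(2^x-1)=2^{\omega+x}-2^{\omega}$ one would get from unrestricted power-set mutation of all $\omega+x$ primaries; but you then defer that accounting instead of carrying it out, and without it the inequality $\kappa+(2^x-1)(\omega+1)\ge\ell$ cannot even be written down, let alone inverted. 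The paper supplies the missing count by committing to a specific blending rule: the $x$ new primaries mutate among themselves into $2^x-1$ new colour-brushes, and each of the $\omega$ existing primaries may \emph{one-on} blend with each of these $2^x-1$ new blends (existing blends, being primary colour blends, may not mutate further), giving $(2^x-1)+\omega(2^x-1)=(2^x-1)(\omega+1)$ additional colour-brushes in total; the inversion then proceeds exactly as you describe.

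Your scrutiny of that factor is well placed, though: the ``one-on'' rule is asserted in the paper's proof rather than derived from Definition~\ref{Defn-2.1}, whose clause (ii) (full power set of all primaries present) would indeed produce the factor $2^{\omega}$ you mention. So the tension you flag is genuine and sits in the paper itself. But a proof of the lemma \emph{as stated} must adopt the restricted one-on blending convention and perform the count above; a proposal that leaves the factor $\omega+1$ ``to be derived'' has identified the right question without answering it, and so does not yet establish part (ii).
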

\begin{proof}
\textit{Part (i):} Since primary colour blends may not blend into secondary colours a minimum set $X$ of primary colours must be allocated in order to ensure that after mutation, at least $(\ell-\kappa)$ additional colour-brushes are available. Let $|X| =x$ and hence we solve for $x$ in the inequality $\min_x(2^x-1)\ge (\ell-\kappa)$. Therefore, a minimum of exactly $x=\lceil log_2(\ell-\kappa+1)\rceil$ primary colours must be allocated additionally.

\vspace{0.35cm}

\textit{Part (ii):} Assume that a minimum of $x$ additional primary colours are to be added. Then, after mutation, $2^x-1$ additional colour blends have been added. Further to this, the existing $\omega$ primary colours may one-on blend with these new $2^x-1$ colour blends. Hence a total of $(2^x-1)(\omega + 1)$ additional colour-brushes have been added. Hence, we must solve for $x$ in, $\min_x((2^x-1)(\omega+1)\ge (\ell-\kappa)$. Therefore, $x=\lceil log_2(\frac{\ell-\kappa}{\omega+1}+1)\rceil$. 
\end{proof}

\ni We note that first part of Lemma \ref{Lem-2.3} is a special case of the second part of Lemma \ref{Lem-2.3} and $\omega=0$ in the first part.

In \cite{TST1}, it is shown that for any tree $T$ with $d_o(T)$ vertices of odd degree, $b_r(T) = \frac{d_o(T)}{2}$. With the aforesaid as basis, the next result follows.
\begin{thm}[Erika's Theorem{\footnote{Dedicated to Erika Kok, the only sister of the first author.}}]
For any tree $T$ and any vertex $v \in V(T)$ of odd degree, there exists at least one optimal orientation corresponding to $\tau(T)$ which, after completion of tattooing, allows at least one colour-brush residing at $v$ and for a vertex $u \in V(T)$ of even degree, there does not exist any optimal orientation corresponding to $\tau(T)$ which, after completion of tattooing, allows a colour-brush residing at $u$.
\end{thm}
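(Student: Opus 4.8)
The plan is to reduce the statement to a parity analysis of in- and out-degrees in an optimal orientation, using the stated tree formula $b_r(T)=\tfrac{d_o(T)}{2}$ together with Lemma \ref{Lem-2.3}. The guiding principle is that, exactly as in brush cleaning, a colour-brush comes to rest at a vertex $w$ precisely when $w$ behaves as a net sink under the orientation driving the tattooing, that is, when strictly more colour-brushes arrive along its in-arcs than depart along its out-arcs. Since every arc is tattooed by exactly one transiting colour-brush, the motion of tokens across $E(T)$ is a one-token-per-arc flow, and whether a brush resides at $w$ is governed by the sign of $d^-(w)-d^+(w)$.

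First I would treat the even-degree vertex $u$. For any orientation $\alpha_i$, parity gives $d^+_{\alpha_i}(u)-d^-_{\alpha_i}(u)\equiv d(u)\equiv 0\pmod 2$, so the net flow at $u$ is even. I would then argue that any orientation realising $\tau(T)$ is forced to balance every even-degree vertex, i.e. $d^+(u)=d^-(u)$. Writing the total residue in the template form $\tfrac12\sum_{w}|d^-(w)-d^+(w)|$ and using $|d^-(w)-d^+(w)|\ge 1$ for odd $w$ and $\ge 0$ for even $w$, optimality must drive each summand to the smallest value parity allows, namely $0$ at even vertices; otherwise reversing a suitable arc-path between an over-loaded even vertex and a source would, by Lemma \ref{Lem-2.3}, strictly lower the number of primary-colour allocations and contradict optimality. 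Hence $d^+(u)=d^-(u)$ and no colour-brush can reside at $u$, in \emph{any} optimal orientation.

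Next I would treat the odd-degree vertex $v$. Parity forces $d^-(v)-d^+(v)$ to be odd, hence non-zero, in every orientation, so the task is only to exhibit one \emph{optimal} orientation in which this sign is positive. I would orient the edges at $v$ so that $d^-(v)=\lceil d(v)/2\rceil$ and $d^+(v)=\lfloor d(v)/2\rfloor$, making $v$ a net sink with $d^-(v)-d^+(v)=1$, and extend this to the hanging subtrees by selecting a locally $\tau$-optimal orientation within each branch. Because $T$ is acyclic, the branch choices are independent and their primary-colour costs add, so the extension attains $\tau(T)$; the single surplus token arriving at $v$ must then reside there after completion, yielding the required residing colour-brush.

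The main obstacle is the interaction of \emph{blending} with the residue count. Unlike ordinary brushes, a vertex may use Lemma \ref{Lem-2.3} to cover many out-arcs from very few primary colours, so the naive conservation ``tokens in $=$ tokens out $+$ residue'' is distorted by the mutation that manufactures extra colour blends, and indeed $\tau(T)$ may fall strictly below $b_r(T)$. The delicate point is to verify that at an even vertex no optimal tattooing can park a blended surplus without a cheaper re-orientation existing, and that at the designated odd vertex the surplus token is genuinely forced to stay rather than being absorbed into a freshly created out-arc colour. I expect this to require careful bookkeeping of primary colours versus blends through the three cases of Definition \ref{Defn-2.1}, showing that a minimal-allocation tattooing can always be chosen to dispatch primary colours preferentially along out-arcs, so that any surviving surplus is an honest residing colour-brush whose presence is dictated purely by the degree parity.
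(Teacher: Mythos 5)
Your approach is genuinely different from the paper's, but it contains a gap that is fatal as written. The foundation of your even-degree case --- that an optimal orientation minimizes the residue sum $\tfrac12\sum_{w}|d^-(w)-d^+(w)|$, so optimality forces $d^+(u)=d^-(u)$ at every even vertex --- is the objective function for the brush number $b_r$, not for $\tau$. Blending makes the cost of an out-surplus logarithmic rather than linear (Lemma \ref{Lem-2.3}), so unbalanced orientations can be exactly as cheap as balanced ones. Concretely, take $T=K_{1,4}$ with centre $u$ of even degree $4$. The orientation with one in-arc and three out-arcs at $u$ realises $\tau(K_{1,4})=2$: one primary colour at the source leaf, one additional primary allocated at $u$, and the blends $c_1, c_2, c_{1,2}$ cover the three out-arcs. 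Hence an optimal orientation need not balance $u$, and your arc-reversal argument cannot get started. Worse for your framework: in the balanced optimal orientation (two in, two out at $u$), the two arriving primaries mutate into three colour-brushes of which only two depart, so under your own token accounting a colour-brush resides at the even vertex $u$ in an optimal orientation --- exactly what the even case of the theorem must exclude. This is the blending obstacle you flag in your final paragraph; it is not a technicality to be settled by ``careful bookkeeping,'' it is the crux, and the proposal explicitly leaves it unresolved.

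The paper sidesteps orientation analysis and flow conservation entirely. It adds a pendant vertex $w$ at the vertex $v$ in question, forming a tree $T'$, and invokes the formula $b_r(T)=\tfrac{d_o(T)}{2}$ from \cite{TST1}: if $\deg_T(v)$ is odd, then $d_o(T')=d_o(T)$, so $b_r(T')=b_r(T)$, which is only possible if some optimal tattooing of $T$ finishes with a brush residing at $v$ (that brush then serves the new pendant arc); if $\deg_T(v)$ is even, then $d_o(T')=d_o(T)+2$, so $b_r(T')=b_r(T)+1$, and a residing brush at $v$ would force $b_r(T')=b_r(T)$, a contradiction. Whatever one thinks of the paper's identification of $\tau$-optimal behaviour with $b_r$-optimal behaviour on trees, that pendant-extension argument never needs the per-vertex balance claim on which your proof rests. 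To salvage a direct argument you would first have to prove that residence of a colour-brush is governed by degree parity in the presence of mutation; the $K_{1,4}$ example shows that this, and not the parity calculus, is where the real difficulty lies.
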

\begin{proof}
Consider any tree $T$ after completion of tattooing corresponding to $\tau(T)$. Add a pendent $w$ to any vertex $v$ of $T$ to obtain another tree $T'$. If $v$ has odd degree in $T$, then it has even degree in $T'$ and $d_o(T) = d_o(T')$ and hence $b_r(T) = b_r(T')$. The latter is only possible if at least one optimal orientation exists which, on completion of tattooing $T$, allows at least one colour-brush residing at $v$ .

If $v$ has even degree in $T$, then it has odd degree in $T'$ and hence $d_o(T') = d_o(T) + 2$. Therefore, $b_r(T') = b_r(T) + 1$. If an optimal orientation exists corresponding to $\tau(T)$ which, on completion of tattooing $T$, allows a colour-brush residing at $v$ then, $b_r(T') = b_r(T)$ which is a contradiction.
\end{proof}

\subsection{Tattoo Number of Certain Graphs}

Recall that a conventional \textit{friendship graph} $Fr(3,n)$, where $n\ge 1$, is a graph in which $n$ copies of the triangle $C_3$ share a common vertex.

\begin{prop}\label{Prop-2.5}
For friendship graphs $Fr(3,n);\ n\ge 1$ we have:
\begin{equation*} 
\tau(Fr(3,n)) =
\begin{cases}
2, & \text{if}\ n=1,2,\\
3, & \text{if}\ n=3,\\
i+2, & \text{if}\ i=2,3,4,\ldots\ \text{and}\ 2^i\le n \le 2^{i+1}-1.
\end{cases}
\end{equation*}
\end{prop}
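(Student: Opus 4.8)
The plan is to pin down $\tau(Fr(3,n))$ by separately establishing a matching upper and lower bound, with all of the real content lying in showing that the natural ``hub'' orientation is essentially optimal. Write $v_0$ for the common vertex, which has degree $2n$, and let the $i$-th triangle have outer vertices $u_i,w_i$, each of degree $2$. For the upper bound I would use the orientation in which $v_0$ is the unique source: orient every triangle as $v_0\to u_i$, $v_0\to w_i$, $u_i\to w_i$, so that $d^-(v_0)=0$, $d^+(v_0)=2n$, each $u_i$ has $d^-=d^+=1$, and each $w_i$ is a sink. Applying Lemma~\ref{Lem-2.3}(i) at $v_0$ with $\kappa=0$, $\omega=0$, $\ell=2n$ shows that allocating exactly $\lceil\log_2(2n+1)\rceil$ primary colours at $v_0$ already gives $|\cP^*_0(X_{t=0}(v_0))|\ge 2n$; after mutation $v_0$ tattoos all of its out-arcs, each $u_i$ forwards its single incoming brush along $u_i\to w_i$, and tattooing is complete. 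Hence $\tau(Fr(3,n))\le\lceil\log_2(2n+1)\rceil$.

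The next step is the arithmetic that converts $\lceil\log_2(2n+1)\rceil$ into the stated case values. For $2^i\le n\le 2^{i+1}-1$ one has $2^{i+1}<2n+1\le 2^{i+2}-1<2^{i+2}$, so $\lceil\log_2(2n+1)\rceil=i+2$; this covers $n=3$ (take $i=1$) and the entire regime $i\ge 2$. The case $n=1$ is $Fr(3,1)=C_3$, for which $\tau=2$ is already recorded in Example~\ref{Ex-2.4}. The one genuinely exceptional value is $n=2$, where $\lceil\log_2 5\rceil=3$ overshoots: a strictly cheaper orientation exists. Take $X_{t=0}(u_1)=\{c_1,c_2\}$, orient the first triangle as $u_1\to v_0$, $u_1\to w_1$, $w_1\to v_0$ and the second as $v_0\to u_2$, $v_0\to w_2$, $u_2\to w_2$. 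Then $c_1$ reaches $v_0$ directly and $c_2$ reaches $v_0$ through $w_1$, so $v_0$ (now of out-degree $2$) has two brushes available and tattoos both remaining out-arcs, after which $u_2$ forwards along $u_2\to w_2$. This uses only two primary colours, which together with the trivial lower bound ($Fr(3,2)$ is not a path, so one brush cannot tattoo it) gives $\tau(Fr(3,2))=2$.

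The main obstacle is the lower bound, namely that $\lceil\log_2(2n+1)\rceil$ cannot be beaten for $n\ge 3$ while $2$ is forced for $n\le 2$. I would argue that any orientation admitting complete tattooing is acyclic, and that at each vertex $v$ the $d^+(v)-d^-(v)$ excess out-arcs must be served by freshly blended brushes, so by Lemma~\ref{Lem-2.3} a vertex carrying surplus $\sigma(v)=\max\{d^+(v)-d^-(v),0\}$ forces at least $\lceil\log_2(\sigma(v)+1)\rceil$ primary colours into its blending cone. The elementary inequality $\lceil\log_2(a+1)\rceil+\lceil\log_2(b+1)\rceil\ge\lceil\log_2(a+b+1)\rceil$ (which follows from $2^{\lceil\log_2(a+1)\rceil+\lceil\log_2(b+1)\rceil}\ge(a+1)(b+1)\ge a+b+1$) then shows that, for a \emph{fixed} total surplus, concentrating it at one vertex is optimal, and $v_0$ is the only vertex whose concentrated surplus can reach $2n$, giving $\lceil\log_2(2n+1)\rceil$. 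The delicate and genuinely subtle point — the step I expect to require the most care — is that the total surplus itself depends on the orientation: balancing $v_0$ lowers its surplus but forces outer vertices to act as sources, and each outer source costs $\lceil\log_2 3\rceil=2$ while supplying only two brushes into the hub. Quantifying this trade-off (a single outer source can feed the hub only when $d^+(v_0)\le 2$, i.e.\ when $n\le 2$, whereas for $n\ge 3$ two or more such sources are needed and the logarithmic concentration bound wins) is exactly the boundary computation that separates $n=1,2$ from $n=3$ and then from each dyadic block $2^i\le n\le 2^{i+1}-1$, and it is where the proof must be executed most carefully.
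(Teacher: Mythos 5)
Your upper bound, the dyadic arithmetic converting $\lceil\log_2(2n+1)\rceil$ into $i+2$, and the handling of $n=1,2$ are all correct and coincide with the paper's own construction (hub allocation via Lemma~\ref{Lem-2.3}(i), plus the outer-vertex trick for $n=2$). The genuine gap is exactly where you flagged it: the lower bound. Your surplus argument cannot be repaired as stated. First, a vertex with excess out-degree does not ``force'' fresh primaries into play: primary colour-brushes that \emph{arrive} at a vertex mutate there at no extra cost (Definition~\ref{Defn-2.1}(ii)--(iii)), so a vertex with $d^-(v)=2$ receiving two distinct primaries already commands $2^2-1=3$ brushes. Second, summing $\lceil\log_2(\sigma(v)+1)\rceil$ over vertices double-counts: a single allocated primary lies in the ``blending cone'' of every vertex downstream of it, so these per-vertex bounds cannot be added.

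The deeper problem is that the inequality you are trying to establish --- that hub concentration is optimal for every $n\ge 3$ --- is false under the paper's own blending rules, so no completion of this step exists. Concretely, for $n=4$: orient triangle $1$ as $u_1\to v_0$, $u_1\to w_1$, $w_1\to v_0$ and the other three triangles outward from $v_0$; allocate $\{c_1,c_2\}$ at $u_1$ and $\{c_3\}$ at $v_0$. Then $u_1$ fires, $w_1$ forwards $c_2$, and when $v_0$ becomes a source it holds the three distinct primaries $c_1,c_2,c_3$, which mutate into $2^3-1=7\ge 6=d^+(v_0)$ brushes; the rest of the graph is tattooed as in your construction. This uses $3<4=i+2$ primaries, so $\tau(Fr(3,4))\le 3$, contradicting the stated formula; similarly two feeding triangles give $\tau(Fr(3,8))\le 4<5$, and the same trick beats the formula at the bottom of every dyadic block $n=2^i$, $i\ge 2$. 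You should know the paper's proof has the identical hole: it simply asserts that ``clearly for $n\ge 4$, it is optimal to allocate the minimum number of primary colours at vertex $u$,'' which is precisely the false step. So your instinct that the source-versus-hub trade-off is the crux was right, but carrying out that analysis honestly refutes the proposition (the true optimum is governed by minimising $2k+m$ subject to $2^{2k+m}-1\ge 2(n-k)$, $k$ being the number of triangles oriented into the hub) rather than proving it.
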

\begin{proof}
\textit{Part (i):} Since $Fr(1,3)=C_3$ the result is obvious. Let the common vertex be labeled $u$ and consider the cycles per se to be labeled cycle $1$ and cycle $2$. Corresponding to the cycle number, label the vertices of cycle $1$ to be $v_{1,1}, v_{2,1}$, and those of cycle $2$, $v_{1,2}, v_{2,2}$ respectively. For tattooing to be allowed to initiate at vertex $u$ a minimum number of 3 primary colours are needed. If two primary colours are allocated to say, $v_{1,1}$ tattooing of cycle $1$ is allowed with $\{c_1,c_2\}$ or $\{c_1,c_{1,2}\}$ or $\{c_2,c_{1,2}\}$ arriving at $u$. Since $F''r(2,3) = C_3$, the result follows.

\vspace{0.25cm}

\textit{Part (ii):}  Let the common vertex be labeled $u$ and consider the cycles per se to be labeled \textit{cycle $1$, cycle $2$, cycle $3$}. Corresponding to the cycle number, label the vertices of cycle $1$, $v_{1,1}, v_{2,1}$, cycle $2$, $v_{1,2}, v_{2,2}$ and those of cycle $3$, $v_{1,3}, v_{2,3}$ respectively. For tattooing to be allowed to initiate at vertex $u$ a minimum number of $3$ primary colours are needed because $\cP_0(\{c_1,c_2,c_3\}) > 6$. If $2$ primary colours are allocated to the vertex, say $v_{1,1}$, then the tattooing of cycle $1$ is allowed with $\{c_1,c_2\}$ or $\{c_1,c_{1,2}\}$ or $\{c_2,c_{1,2}\}$ arriving at $u$. Since $F''r(3,3) = Fr(2,3)$, the result follows.

\vspace{0.25cm}

\textit{Part (iii):}  Let the common vertex be labeled $u$ and consider the cycles per se to be labeled cycle $1$, cycle $2$, cycle $3$, \ldots, cycle $n$. Corresponding to the cycle number, label the vertices of cycle $1$, $v_{1,1}, v_{2,1}$, cycle $2$, $v_{1,2}, v_{2,2}$\ldots and those of cycle $n$, $v_{1,n}, v_{2,n}$ respectively. We have $d(u)=2n$ and clearly for $n\ge 4$, it is optimal to allocated the minimum number of primary colours at vertex $u$. The proof of the latter part is also similar. Clearly, for $4 \le n\le 7$, we have $8\le d(u) \le 14$ and hence since $2^3-1< 8$ and $2^4-1 > 14$, $\tau(Fr(3,n))=4;\ 4\le n\le 7$. The latter can be expressed as $\tau(Fr(3,n))=i+2,\ i=2$ and $2^i \le n \le 2^{i+1}-1$. As all expressions are well-defined the general result follows through immediate induction.
\end{proof}

Next, we introduce the notion of a new family of graphs namely $J9$-graphs and the notion of a Joost graph as follows.  

\begin{defn}\label{Defn-2.5}{\rm 
Consider the inter-connected paths $u_1,v_{1,j},v_{2,j},\ldots,v_{n-2,j},u_2$, $n\ge 3$ for $j=1,2,3,\ldots,k$, where $k\ge 1$. The family of graphs is called $J9$-graphs.{\footnote{This graph was conceptualised in honour of Joost van der Westhuizen, former Springbok scrum-half who was diagnosed with Amyotrophic Lateral Sclerosis in 2011.}} A member of the $J9$-graphs is denoted $P^{(k)}_n$ and is called a \textit{Joost graph}.
}\end{defn}

Note that the respective end-vertices are in common. Figure \ref{fig:JoostGraph}, given below depicts the Joost graph $P^{(3)}_{7}$.

\begin{figure}[h!]
\centering
\includegraphics[width=0.9\linewidth]{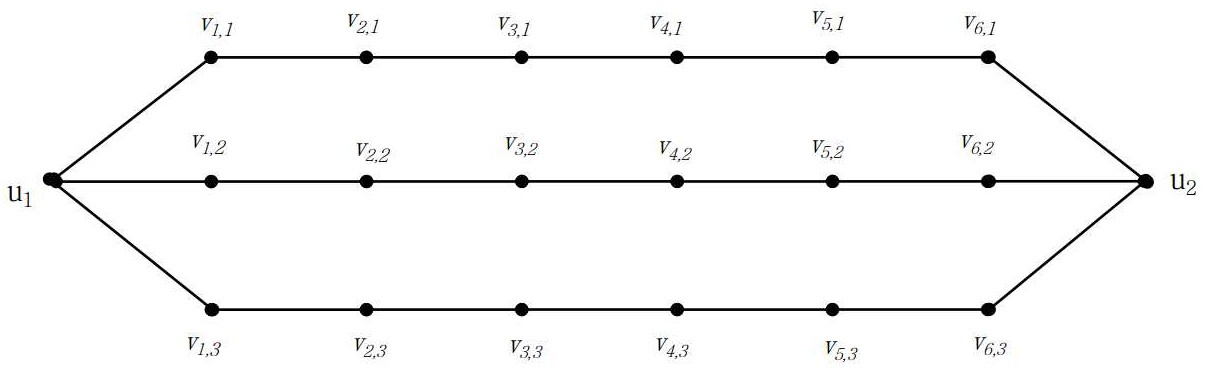}
\caption{The Joost graph, $P^{(3)}_{7}$.}
\label{fig:JoostGraph}
\end{figure}

\begin{prop}\label{Prop-2.6}
For $J9$-graphs $P^{(k)}_n$, $n\ge 3$ and $k\ge 1$, we have
\begin{equation*} 
\tau(P^{(k)}_n)) =
\begin{cases}
1, &\text {if $k=1$},\\
i+1, &\text {if $i=1,2,3,\ldots$ and $2^i\le k \le 2^{i+1}-1$}.
\end{cases}
\end{equation*}
\end{prop}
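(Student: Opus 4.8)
The plan is to split the analysis into the trivial case $k=1$ and the main case $k\ge 2$, establishing the two bounds of the stated value separately and then reconciling the ranges exactly as at the end of Proposition \ref{Prop-2.5}.

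For $k=1$ the Joost graph $P^{(1)}_n$ is just the directed path on $n$ vertices, so Example \ref{Ex-2.4}(i) gives $\tau(P^{(1)}_n)=1$ immediately. For $k\ge 2$ I would first record the structure: the two common end-vertices $u_1,u_2$ have degree $k$, while every internal vertex $v_{i,j}$ has degree $2$. For the upper bound I orient each of the $k$ internally disjoint $u_1$--$u_2$ paths consistently from $u_1$ to $u_2$. Then $u_1$ is the unique source with $d^+(u_1)=k$ and $d^-(u_1)=0$, every internal vertex has $d^-=d^+=1$ (so it merely relays a single colour-brush and needs no allocation), and $u_2$ is a sink. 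Hence the only allocation is at $u_1$, and Lemma \ref{Lem-2.3}(i) with $\kappa=0$, $\ell=k$ shows that exactly $\lceil\log_2(k+1)\rceil$ primary colours suffice. Since $2^i\le k\le 2^{i+1}-1$ gives $2^i<k+1\le 2^{i+1}$, we get $\lceil\log_2(k+1)\rceil=i+1$, so $\tau(P^{(k)}_n)\le i+1$, and this value is manifestly independent of $n$.

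For the matching lower bound I would argue that no orientation tattoos $P^{(k)}_n$ with fewer than $i+1$ primary colours. Write $T$ for the total number of primaries allocated; since every colour-brush is a non-empty subset of these $T$ primaries, at most $2^T-1$ distinct colour-brushes can ever exist. I classify each path by the directions of its two end-arcs (at $u_1$ and at $u_2$): the two monotone types feed a path from $u_1$ or from $u_2$, the outward type forces an internal sink, and the inward type forces an internal source. An internal source is a degree-$2$ vertex of in-degree $0$, so by Lemma \ref{Lem-2.3}(i) it already costs at least two primaries, and, lying on a single path, it can relay at most one colour-brush toward each of $u_1,u_2$. I then show that any attempt to lower $d^+(u_1)$ below $k$ by routing paths into $u_1$ either produces a circular dependency between $u_1$ and $u_2$ with no source (a deadlock), or pays a cost-$2$ internal source for each primary delivered to $u_1$; delivering $m$ primaries lets $u_1$ form only $2^m-1$ brushes at cost $2m$, and $2m>\lceil\log_2(k+1)\rceil$ precisely once $2^m-1\ge k-m$, so such detours never beat concentrating all colours at a single end-vertex. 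Thus the minimum is realised by the all-one-direction orientation, giving $\tau(P^{(k)}_n)\ge\lceil\log_2(k+1)\rceil=i+1$.

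The main obstacle is exactly this lower bound: ruling out the ``delivery'' trick that genuinely helped for the friendship graph $Fr(3,n)$ in Proposition \ref{Prop-2.5}. The contrast is structural --- inside a triangle the generating vertex reaches the hub along two internally disjoint arcs and so exports two primaries per allocation, whereas in a Joost graph a source sits on a single path and must spend one of its two out-arcs pointing away from the target end-vertex, halving the export rate and making every such detour strictly unprofitable. Once this is made precise (a short case check over the four path-types together with Lemma \ref{Lem-2.3} applied at each source), the exact value and its independence of $n$ follow, and the cases $k=1$ and $2^i\le k\le 2^{i+1}-1$ are reconciled by the same immediate induction used to close Proposition \ref{Prop-2.5}.
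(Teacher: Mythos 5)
Your proposal is correct, and its upper bound coincides with the paper's construction: orient every path from $u_1$ to $u_2$ and allocate $\lceil\log_2(k+1)\rceil$ primaries at $u_1$, which equals $i+1$ exactly when $2^i\le k\le 2^{i+1}-1$. Where you genuinely diverge is in everything else. The paper only checks $k=1,2,3$ explicitly, declares the allocation at the degree-$k$ end-vertex ``clearly a minimum allocation,'' and closes with ``immediate induction''; it contains no argument excluding orientations that feed colour-brushes \emph{into} an end-vertex. You supply exactly that missing argument: the classification of paths by their end-arc directions, the observation that a mixed monotone orientation deadlocks (each of $u_1,u_2$ waits for the other to fire), the observation that any non-deadlocking delivery must originate at an internal source which, having degree $2$ and in-degree $0$, costs at least two primaries by Lemma \ref{Lem-2.3}(i) yet exports only one brush toward the target end-vertex, and the accounting inequality that a feasible delivery scheme with $m$ sources and $\omega$ primaries at $u_1$ needs $2^{m+\omega}-1\ge k-m$ and hence costs $2m+\omega\ge\lceil\log_2(k+1)\rceil$. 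This is not pedantry: your own contrast with Proposition \ref{Prop-2.5} shows the issue is real, since in $Fr(3,2)$ delivery strictly beats allocating at the hub ($2$ versus $\lceil\log_2 5\rceil=3$), precisely because a triangle source exports two brushes to the hub while a path source exports only one; so the optimality of concentrating all primaries at one end-vertex, which the paper takes for granted, genuinely requires proof. Two small repairs when you write it out: the strict inequality ``$2m>\lceil\log_2(k+1)\rceil$'' should be ``$\ge$'' (for $k=2$, $m=1$ delivery ties rather than loses, $2=\lceil\log_2 3\rceil$), and the case check should also cover simultaneous allocations at both end-vertices and delivered blends under the one-on blending rule implicit in Lemma \ref{Lem-2.3}(ii) --- the same exponential-versus-linear accounting absorbs both cases.
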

\begin{proof}
\textit{Part (i):} For $k=1,\ P^{(1)}_n=P_n$ and hence the result follows.

\textit{Part (ii):} For $k=2$, $P^{(2)}_n = C_n$ hence the result. For $k=3$ and considering Definition \ref{Defn-2.5} the common end-vertices are $u_1,u_2$. Consider the paths to be path $1$, path $2$, path $3$, respectively. From Definition \ref{Defn-2.5} it follows that the internal vertices are $v_{1,1},v_{2,1},\ldots, v_{n-2,1}$, $v_{1,2},v_{2,2},\ldots, v_{n-2,2}$ and $v_{1,3},v_{2,3},\ldots, v_{n-2,3}$. As $d(u_1)=d(u_2)=3$ and $2^2-1=3$ the allocation of $\{c_1,c_2\}$ to the vertex, say $u_1$, suffices and is clearly a minimum allocation. Combining $k=2,3$ allows the expression $\tau(P^{(k)}_n)=i+1,\ i=1$, $2^1 \le k \le 2^{1+1}-1$. Since all expressions are well-defined, the general result follows through immediate induction.
\end{proof}

Note that the result in Proposition \ref{Prop-2.6} is equivalent to $\tau(P^{(k)}_n)) = i+1$, if $i= 0,1,2,\ldots$ and $2^i\le k \le 2^{i+1} -1$. We also observe that the method of proof of Proposition \ref{Prop-2.5} and Proposition \ref{Prop-2.6} illustrate the principles that in a friendship graph ($Fr(3,n)$) the optimal tattooing sequence must initiate at a vertex with minimum degree, $\delta(Fr(3,n))$ whilst for a Joost graph ($P^{(k)}_n$), the optimal tattooing sequence must begin at a vertex with maximum degree, $\Delta(P^{(k)}_n)$.

\section{Conclusion}

In this paper, we have discussed a new concept namely tattooing, as a colouring extension of cleaning models used in graph theory and introduced some interesting concepts and parameters in that area. We have also pointed out that the concept of tattooing has many real world applications and many of them are still to be explored and to be discovered. This impression is mainly based on the fact that many biological or virtual propagation models or mechanisms rely on mutation to reach a threshold level before propagation ignites. 

Besides determining the invariants $b_\tau(G), \mathfrak{T}_{FSG}(G)$ and $\tau(G)$ the invariant $\mathfrak{T}(G)$ is open for complexity and probability analysis. Determining the tattoo number of different graph classes offers much for further investigations. An algorithmic study of this graph parameter is also worth for future studies.


\begin{thebibliography}{25}

\bibitem{BLS1} A. Bj\"{o}rner, L. Lovasz, and P. Shor, \textit{Chip firing games on graphs}, European J. Combin. \textbf{12}(1991), 283-291.

\bibitem{BM1} J. A. Bondy and U.S.R. Murty, \textbf{Graph theory with applications,} Macmillan Press, London, 1976.

\bibitem {BLS} A. Brandst\"{a}dt, V. B. Le and J. P. Spinrad, {\bf Graph classes: A survey}, SIAM, Philadelphia, 1999.

\bibitem{CL1} G. Chartrand and L. Lesniak, \textbf{Graphs and digraphs}, CRC Press, 2000.

\bibitem{DD1} D. Dyer, \textbf{Sweeping graphs and digraphs}, Ph.D. thesis, Simon Fraser University, Canada, 2004.

\bibitem{CZ1} G. Chartrand and P. Zhang, \textbf{Chromatic graph theory}, CRC Press, 2009.

\bibitem {FH}  F. Harary, {\bf Graph theory}, Addison-Wesley Pub. Co. Inc., Philippines, 1969.

\bibitem{LHH1} L. H. Harper, \textit{Optimal assignments of numbers to vertices}, SIAM J. Appl. Math., \textbf{12} (1964), 131–135.
	
\bibitem{JT1} T. R. Jensen and B. Toft, {\bf Graph colouring problems}, John Wiley \& Sons, 1995.

\bibitem{KSK1} J. Kok, C. Susanth and S.J. Kalayathankal, \textit{Brush numbers of certain Mycielski graphs}, Int. J. Pure Appl. Math., {\bf 106}(2)(2016), 663-675., DOI: 10.12732/ijpam.v106i2.28.

\bibitem{SM1} S. McKeil, \textit{Chip firing cleaning process}, M. Sc. Thesis, Dalhousie University, Canada., 2007.

\bibitem{MEM1} M. E. Messinger, \textit{Methods of decontaminating a network}, Ph.D. Thesis, Dalhousie University, Canada., 2008.

\bibitem{MEM2} M. E. Messinger, R. J. Nowakowski and P. Pralat, \textit{Cleaning a network with brushes}. Theor. Comput. Sci., \textbf{399}(2008), 191-205.

\bibitem{TDP1} T. D. Parsons, \textit{Pursuit-evasion in a graph}, in \textbf{Theory and Applications of Graphs}, Y. Alavi and D. R. Lick, (Eds),  Springer, Berlin, 1976, 426-441.

\bibitem{TDP2} T. D. Parsons, \textit{The search number of a connected graph}, in \textbf{Proc. Ninth Southeastern Conf. Combinatorics, Graph Theory and Computing}, Congressus Numerantium, XXI, Winnipeg, 1978, 549-554.

\bibitem{TST1} T. S. Tan,\textit{The brush number of the two dimensional torus}, Pre-print, arXiv: 1012.4634v1 [math.CO].

\bibitem{EWW} E. W. Weisstein, {\bf CRC concise encyclopaedia of mathematics}, CRC press, 2011.

\bibitem {DBW} D. B. West, {\bf Introduction to graph theory}, Pearson Education Inc., 2001.

\end{thebibliography}
\end{document}